\documentclass[12pt]{article}

\makeatletter
\let\@fnsymbol\@arabic
\makeatother

\setlength{\textheight}{23cm}\setlength{\textwidth}{16cm}
\setlength{\topmargin}{-1cm} \setlength{\oddsidemargin}{-0.2cm}
\setlength{\evensidemargin}{0.0cm}

\usepackage{amsfonts,amsmath,amsthm,amssymb,graphicx,epsfig,makeidx,latexsym}

\newtheorem{theorem}{Theorem}[section]
\newtheorem{prop}{Proposition}[section]

\newtheorem{remark}{Remark}[section]
\newtheorem{corr}{Corollary}[section]

\newcommand{\E}{{\mathbb E}}
\newcommand{\RR}{{\mathbb R}}

\newcommand {\PP}{{\mathbb P}}
\newcommand{\sss}{\scriptscriptstyle}

\begin{document}

\title{Routing on trees}\parskip=5pt plus1pt minus1pt \parindent=0pt 
\author{Maria Deijfen\thanks{Stockholm University, Sweden; {\tt mia@math.su.se}} \and Nina Gantert\thanks{Technische Universit\"{a}t M\"{u}nchen, Germany ; {\tt gantert@ma.tum.de}}}
\date{July 9, 2014}
\maketitle

\begin{abstract}
\noindent We consider three different schemes for signal routing on a tree. The vertices of the tree represent transceivers that can transmit and receive signals, and are equipped with i.i.d.\ weights representing the strength of the transceivers. The edges of the tree are also equipped with i.i.d.\ weights, representing the costs for passing the edges. For each one of our schemes, we derive sharp conditions on the distributions of the vertex weights and the edge weights that determine when the root can transmit a signal over arbitrarily large distances.

\noindent
\vspace{0.3cm}

\noindent \emph{Keywords:} Trees, transmission, first passage percolation, branching random walks, Markov chains.

\vspace{0.2cm}

\noindent AMS 2010 Subject Classification: 60K37, 60J80, 60J10.
\end{abstract}

\section{Introduction}

Let $\mathcal{T}$ be a rooted infinite $m$-ary tree and assign i.i.d.\ weights $\{R_x\}$ to the vertices of $\mathcal{T}$ and i.i.d.\ weights $\{C_e\}$ to the edges. Assume that $\{R_x\}$  is independent of $\{C_e\}$. We think of the vertices as representing transceivers that can receive and transmit signals. The vertex weights represent the strength or range of the transceivers and the edge weights represent the cost or resistance when traversing the edges. We study three different schemes for signal routing in $\mathcal{T}$ and, for each of these schemes, we investigate when the root can transmit a signal over arbitrarily large distances. More specifically, write $O$ for the set of vertices that are reached by a signal transmitted by the root, and say that a scheme can transmit indefinitely if $|O|=\infty$ with positive probability. Our main results are sharp conditions on the distributions of $R$ and $C$ that determine when the respective routing schemes can transmit indefinitely. Here and throughout the paper, $R$ and $C$ denote random variables with the laws of $R_x$ and $C_e$, respectively.

Write $\Gamma_{x,y}$ for the path between the vertices $x$ and $y$ in $\mathcal{T}$, and write $y>x$ if $y$ is located in the subtree below $x$ in $\mathcal{T}$ (so that $y$ is hence further away from the root than $x$). For each vertex $x$, let $\Lambda_x$ be the set of all vertices $y$ in the subtree below $x$ for which the total cost of the path from $x$ to $y$ does not exceed the range of $x$, that is,
$$
\Lambda_x=\Big\{y>x:\sum_{e\in\Gamma_{x,y}}C_e\leq R_x\Big\}.
$$
We say that the vertices in $\Lambda_x$ are within the range of $x$. The schemes that we will consider are now defined as follows.

\noindent\textbf{Complete routing.} The root 0 first transmits the signal to all vertices in $\Lambda_0$. In the next step, each vertex $x\in \Lambda_0$ forwards the signal to all vertices in $\Lambda_x$, and the signal is then forwarded according to the same rule by each new vertex that is reached by it. Note that edges leading back towards the root are not used in the forwarding process, that is, the transceivers do not forward the signal through the same edge that the signal arrived from. This simplifies the analysis since it implies that whether a signal reaches a vertex $y$ or not is determined only by the configuration on the path between 0 and $y$.

\noindent\textbf{Boundary routing.} For a connected subset $\Omega$ of the vertices in $\mathcal{T}$, with $0\in \Omega$, let $\partial \Omega$ denote the set of vertices in $\Omega$ that have at least one child that is not in $\Omega$. The transmission is initiated in that the root 0 transmits the signal to all vertices in $\Lambda_0$, and the signal is then forwarded stepwise: If the set of vertices that have received the signal after a certain step is $\Omega$, then, in the next step, the signal is forwarded by each $x\in \partial \Omega$ to all vertices $y$ in $\Lambda_x$ such that the path between $x$ and $y$ (excluding $x$) contains only vertices in $\Omega^c$. The difference compared to complete routing is hence that only vertices with neighbors that have not yet heard the signal forward the signal and then only in the direction of these un-informed neighbors.

\noindent\textbf{Augmented routing.} For a vertex $x$ at level $k$ in $\mathcal{T}$, write $0=x_0,\ldots,x_k=x$ for the path from the root to $x$. In the last scheme, when a signal traverses an edge, its strength is reduced by the cost of the edge, and when it passes a transceiver, it is amplified by the strength of the transceiver. The signal hence reaches the vertex $x$ at level $k$ if and only if
$$
\sum_{i=0}^nR_{x_i}>\sum_{i=0}^{n}C_{(x_i,x_{i+1})}\quad\mbox{for all }n=0,1,\ldots,k-1.
$$
Write $O_{\sss{\rm comp}}$, $O_{\sss{\rm bond}}$ and $O_{\sss{\rm aug}}$ for the sets of vertices that are reached by a signal transmitted by the root using complete routing, boundary routing and augmented routing, respectively. Clearly, complete routing dominates boundary routing in the sense that $O_{\sss{\rm bond}}\subset O_{\sss{\rm comp}}$. Furthermore, augmented routing dominates complete routing in the same sense. Indeed, with augmented routing, the strength of a transceiver may be stored and used at any point in the forwarding process, while in complete routing, a transceiver at $x$ is only effective within $\Lambda_x$. Hence,
\begin{equation}\label{eq:hierarchy}
O_{\sss{\rm bond}}\subseteq O_{\sss{\rm comp}}\subseteq O_{\sss{\rm aug}}\quad \mbox{a.s.}
\end{equation}
Note that, if $R\geq C$ almost surely, then all three schemes can trivially transmit indefinitely, while on the other hand, if $R<C$ almost surely, then a signal has no chance of spreading at all in any of the schemes. Hence the interesting case is when $\{R\geq C\}$ has a non-trivial probability. It is then natural to investigate the possibility of infinite transmission in the schemes and to compare the schemes in this sense. Are there for instance cases when complete routing (and thereby also augmented routing) can transmit indefinitely but not boundary routing? And are there cases when augmented routing, but not boundary routing and complete routing, can transmit indefinitely? Furthermore, one might ask in general what happens when one or both of the variables $R$ and $C$ have power-law distributions. For what values of the exponents is it possible to transmit a signal over arbitrarily large distances? These questions can, and will, be answered by analyzing the conditions for infinite output range derived below.

The paper is organized so that augmented routing is analyzed in Section 3, using tools related to branching random walks. Complete routing and boundary routing are then treated in Section 3 and 4, respectively, by generalizing the arguments from Section 3. In each section, we also give examples and make the conditions more explicit for certain distribution types. Section 5 contains a summary, further comparison of the derived conditions and some directions for further work. Throughout we assume that $\{R\geq C\}$ has non-trivial probability.

\subsection{Related work}

Probability on trees has been a very active field of probability for the last decades; see e.g.\ \cite{climb} for an introduction and \cite{LyonsPeresbook} for a recent account. The work here is closely related to first passage percolation on trees and tree-indexed Markov chains, see e.g.\ \cite{benjaminiperes94perc, lyonspem}. We also rely on results and techniques for branching random walks, see \cite{Zhan}. Transceiver networks have previously been analyzed in the probability literature in the context of spatial Poisson processes, see \cite{boll}, but the setup there is quite different from ours.

\section{Augmented routing}

We begin by analyzing the augmented routing scheme. To this end, first note that the transmission process can be represented by a process that we will identify below as a killed branching random walk: Define $V_0=0$ for the root and then, for a vertex $y$ that is a child of $x$, let $V_y=V_x+Z_{x,y}$, where $Z_{x,y}=R_x-C_{(x,y)}$. This means that $V_y$ keeps track of the strength of the signal when it arrives at $y$. When $V_y$ takes on a negative value, the process dies at that location and the subtree below $y$ is declared dead.

If $m=1$, we have a random walk, killed when it takes a negative value. Hence, in this case,
$\PP\left(|O_{\sss{\rm aug}}|=\infty\right) > 0$ if $\E[R] > \E[C]$ and
$\PP\left(|O_{\sss{\rm aug}}|=\infty\right) = 0$ if $\E[R] \leq \E[C]$ and both expectations are finite. If $R$ and $C$ both have infinite expectations, both scenarios can happen. For the remainder of the section we assume that $m\geq 2$.

A one-dimensional, discrete-time branching random walk may be defined as follows: At the beginning, there is a single particle located at $V_0=0$. Its children, who form the first generation, are positioned according to a certain point process. Each of the particles in the first generation gives birth to new particles that are positioned (with respect to their birth places) according to the same point process; they form the second generation. The system goes then on according to the same mechanism. See for instance \cite{Zhan} for an account of results on this model.

In our case, each particle has $m$ children and the point process of displacements of the children of $x$ consists of $\{Z_{x,y}: y \hbox{ child of } x\}$. Let $\mathcal{V}$ denote the vertex set of the tree. The process starts with $V_0=0$ and, for a vertex $y$ that is a child of $x$, we have $V_y=V_x+Z_{x,y}$, where $\{Z_{x,y}: y \hbox{ child of } x\}_{x \in \mathcal{V}}$ form a collection of i.i.d.\ random variables. Note that, unlike in ``classical'' branching random walk, the displacements $\{Z_{x,y}: y \hbox{ child of } x\}$ are not i.i.d., since, for a fixed $x$, the term $R_x$ appearing in the definition of $Z_{x,y}$ is the same for all children of $x$. Nevertheless, $\{Z_{x,y}: y \hbox{ child of } x\}_{x \in \mathcal{V}}$ are i.i.d.\  and hence $\{V_y\}$ fits in the more general definition of a branching random walk above.

Now kill the branching random walk at $0$, that is, whenever $V_x < 0$, the process dies and the subtree below the vertex $x$ is declared dead. The survival probability in this killed random walk coincides with the probability of infinite transmission for augmented routing, and we would hence like to obtain a condition that determines when the survival probability is strictly positive. To this end, let $Z, Z_1, Z_2,\ldots$ be i.i.d.\ with the same law as $Z_{x,y}$ and let $I(\cdot)$ be the right-side large deviation rate function for $Z$, defined by
\begin{equation}\label{rsrate}
I(s): = \sup\limits_{\lambda \geq 0}[\lambda s - \log E[\exp(\lambda Z)]]\in [0, \infty]\, .
\end{equation}
Then Cram\'er's Theorem implies that
\begin{equation}\label{ld}
\lim\limits_{n \to \infty}\frac{1}{n}\log \PP\left[\frac{Z_1 + \cdots + Z_n}{n} \geq s\right] = -I(s),
\end{equation}
see \cite[Theorem 2.2.3]{dembozeitouni}, and hence $I(s)$ describes deviations ``to the right'' of $s$ (note that $\lambda$ is only running through the non-negative reals). In particular, we have $I(s) =0$ if $s \leq \E[Z]$.

Define
$$
s^* : = \sup\{s: I(s) \leq \log m\}\in (-\infty, \infty]\, .
$$
Note that, since $I(\cdot)$ is convex and non-decreasing, with $I(s) =0$ for $s \leq \E[Z]$, we have that
$s^* > 0$ if and only if $I(0) < \log m$. With this at hand, we can determine when the killed branching random walk which describes the transmission process with augmented routing has a strictly positive survival probability.

\begin{prop}\label{charsurvival}
Let $m \geq 2$. For the survival probability $\alpha=\PP\left(|O_{\sss{\rm bond}}|=\infty\right)$ of the killed branching random walk, we have $\alpha > 0$ if and only if $s^* >  0$. In particular, $\alpha > 0$ if and only if $I(0) < \log m$. Note that, if $\E[R - C] \geq 0$, then $I(0) = 0$ so that $\alpha > 0$.
\end{prop}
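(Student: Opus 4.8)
The plan is to prove the equivalence $\alpha>0\iff I(0)<\log m$ (the reformulation in terms of $s^*$ being already recorded in the lines preceding the statement), the direction ``$\alpha>0\Rightarrow I(0)<\log m$'' by a first-moment estimate and the converse by comparison with a supercritical Galton--Watson tree. The structural fact I use throughout is that, although the displacements attached to the children of one vertex are dependent, along any fixed ray $0=x_0,x_1,x_2,\ldots$ the increments $Z_{x_0,x_1},Z_{x_1,x_2},\ldots$ are i.i.d.\ copies of $Z$; hence, with $S_j:=Z_{x_0,x_1}+\cdots+Z_{x_{j-1},x_j}$, we have $V_{x_j}=S_j$ distributed as an ordinary random walk with step law $Z$.

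\emph{Extinction when $I(0)\geq\log m$.} Since $O_{\sss{\rm aug}}$ is a subtree of the locally finite tree $\mathcal T$ containing the root, König's lemma gives $\{|O_{\sss{\rm aug}}|=\infty\}\subseteq\bigcap_n\{N_n\geq 1\}$, where $N_n$ counts the surviving vertices at level $n$; so it suffices that $\PP[N_n\geq 1]\to 0$. Averaging over the $m^n$ vertices at level $n$ and retaining only the necessary condition ``the last partial sum is non-negative'' yields $\E[N_n]\leq m^n\PP[S_n\geq 0]\leq m^n\inf_{\lambda\geq 0}\E[e^{\lambda Z}]^n=e^{n(\log m-I(0))}$ by the Chernoff bound. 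If $I(0)>\log m$ this tends to $0$. For the borderline $I(0)=\log m$ (which, as $I(s)=0$ for $s\leq\E[Z]$, forces $\E[Z]<0$) one keeps all partial sums: tilting the step by $e^{\lambda^* z}$, where $\lambda^*>0$ minimises $\E[e^{\lambda Z}]$, produces a centred finite-variance walk, and the classical estimate $\PP[S_j\geq 0,\,1\leq j\leq n]\asymp n^{-1/2}$ upgrades the bound to $\E[N_n]\leq Cn^{-1/2}e^{n(\log m-I(0))}\to 0$. The sole case with no such non-degenerate tilt is $Z\leq 0$ a.s.\ (allowed, since $\{R\geq C\}$ is only assumed non-trivial); then the survivors at level $n$ are exactly those with only zero increments on their path, which form a Galton--Watson tree with mean offspring $m\,\PP(Z=0)=e^{\log m-I(0)}\leq 1$, hence a.s.\ extinct.

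\emph{Survival when $I(0)<\log m$.} Fix a large integer $L$ and call a level-$L$ descendant $z$ of a vertex $w$ \emph{good} if every partial sum of the increments along the path from $w$ to $z$ is $\geq 0$. Since ``$w$ is good'' depends only on the weights strictly above $w$ (the increment $V_z-V_w$ uses $R_w$, but $V_w$ does not), while ``$w$'s good descendants'' depends only on the subtree below $w$, and subtrees at distinct level-$L$ vertices are independent and distributed as $\mathcal T$, the iterated good descendants of the root (at levels $L,2L,\ldots$) form a Galton--Watson tree with offspring law that of $Y:=\#\{z\text{ at level }L:\ \min_{1\leq j\leq L}S_j\geq 0\}$; on the event that it is infinite there is a ray on which $V$ never drops below $0$, so the killed branching random walk --- equivalently augmented routing --- survives and $\alpha>0$. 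It remains to pick $L$ with $\E[Y]=m^L\PP[\min_{1\leq j\leq L}S_j\geq 0]>1$. Here $\tfrac1L\log\PP[\min_{1\leq j\leq L}S_j\geq 0]\to -I(0)$: if $\E[Z]\geq 0$ the probability decays at most polynomially (and is bounded below if $\E[Z]>0$), if $\E[Z]<0$ one again tilts by $\lambda^*$ to a centred walk and uses the lower half of the above ballot-type estimate to get $\PP[\min_{j\leq L}S_j\geq 0]\geq e^{-LI(0)-o(L)}$ (and $=\PP(Z=0)^L$ if $Z\leq 0$ a.s.). Hence $\E[Y]=e^{L(\log m-I(0))+o(L)}\to\infty$, so $\E[Y]>1$ for $L$ large. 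Finally, if $\E[R-C]\geq 0$ then $\E[e^{\lambda Z}]\geq e^{\lambda\E[Z]}\geq 1$ for all $\lambda\geq 0$ by Jensen, so $I(0)=0<\log m$ and $\alpha>0$ by this step.

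I expect the delicate point to be the lower bound $\PP[\min_{j\leq L}S_j\geq 0]\geq e^{-LI(0)-o(L)}$ in the survival half: the change-of-measure argument needs $\lambda^*$ in the interior of the domain of $\lambda\mapsto\E[e^{\lambda Z}]$ and the tilted step to have finite variance, so the heavy-right-tailed case (where $I\equiv 0$ and everything is trivial) and the one-sided case $Z\leq 0$ a.s.\ have to be peeled off. A shorter alternative for both directions is to quote the standard criterion for survival of a branching random walk absorbed below a fixed level --- available via the branching-random-walk machinery in \cite{Zhan} or the tree-indexed Markov chain results of \cite{benjaminiperes94perc,lyonspem} --- which states precisely that survival holds iff $\inf_{\lambda\geq 0}m\,\E[e^{\lambda Z}]>1$, i.e.\ iff $I(0)<\log m$.
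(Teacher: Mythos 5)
Your proposal is correct and follows essentially the same route as the paper: a Chernoff/union-bound first-moment argument for extinction when $I(0)>\log m$, an embedded supercritical Galton--Watson tree built from $L$-step blocks whose partial sums stay nonnegative (i.e.\ the Mogulskii-type estimate) for survival when $I(0)<\log m$, and a polynomial refinement of the large-deviation bound to settle the critical case $I(0)=\log m$. The only substantive difference is at criticality, where the paper invokes the Bahadur--Rao theorem for $\PP[S_k\geq 0]$ and Borel--Cantelli along the subsequence $k^4$, while you extract the $n^{-1/2}$ factor from the exponentially tilted walk constrained to stay nonnegative; both devices play the same role (and both share the implicit assumption that the optimal tilt exists, which you at least flag explicitly).
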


\begin{corr}
If $m \geq 2$, then $\PP\left(|O_{\sss{\rm aug}}|=\infty\right) > 0$ if and only if
\begin{equation}\label{critaug}
\E[e^{\lambda R}]\cdot \E[e^{-\lambda C}] >  \frac{1}{m}\quad \hbox{ for all } \lambda \geq 0\,.
\end{equation}
\end{corr}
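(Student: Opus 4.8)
The plan is to derive the corollary from Proposition~\ref{charsurvival}. Recall that the survival probability of the killed branching random walk equals $\PP(|O_{\sss{\rm aug}}|=\infty)$, that the displacement variable is $Z=R-C$ with $R$ and $C$ independent, and that Proposition~\ref{charsurvival} identifies positivity of this survival probability with the condition $I(0)<\log m$. By the definition \eqref{rsrate} of the right-side rate function,
$$
I(0)=\sup_{\lambda\geq0}\big(-\log\E[e^{\lambda Z}]\big),
$$
so everything reduces to re-expressing the inequality $I(0)<\log m$ in terms of the laws of $R$ and $C$.

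First I would factor the moment generating function using independence: for every $\lambda\geq0$,
$$
\E[e^{\lambda Z}]=\E\big[e^{\lambda R}e^{-\lambda C}\big]=\E[e^{\lambda R}]\,\E[e^{-\lambda C}]=:g(\lambda)\in(0,\infty].
$$
Since $t\mapsto\log t$ is increasing and continuous, $I(0)=-\inf_{\lambda\geq0}\log g(\lambda)=-\log\big(\inf_{\lambda\geq0}g(\lambda)\big)$, and therefore
$$
I(0)<\log m\quad\Longleftrightarrow\quad \inf_{\lambda\geq0}g(\lambda)>\frac1m.
$$
One half of the corollary is then immediate: if $\inf_{\lambda\geq0}g(\lambda)>1/m$ then $g(\lambda)>1/m$ for every $\lambda\geq0$, so by Proposition~\ref{charsurvival} a strictly positive survival probability implies \eqref{critaug}.

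For the converse implication I must pass from the pointwise bound ``$g(\lambda)>1/m$ for all $\lambda\geq0$'' back to ``$\inf_{\lambda\geq0}g(\lambda)>1/m$'', and this is the one step that requires care, since the supremum defining $I(0)$ need not be attained. The tool is convexity: $\log\E[e^{\lambda R}]$ and $\log\E[e^{-\lambda C}]$ are convex on $[0,\infty)$ as cumulant generating functions, so $g$ is log-convex with $g(0)=1$. If $\E[R-C]\geq0$, then $g$ is non-decreasing and $\inf_{\lambda\geq0}g(\lambda)=g(0)=1>1/m$ for $m\geq2$; this also re-proves the last sentence of Proposition~\ref{charsurvival}. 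If $\E[R-C]<0$ but $\PP(R>C)>0$, then $g(\lambda)\geq e^{\lambda\delta}\,\PP(R-C\geq\delta)\to\infty$ for a suitable $\delta>0$, so by convexity the infimum of $g$ over $[0,\infty)$ is attained at a finite point $\lambda_0$, whence $\inf_{\lambda\geq0}g(\lambda)=g(\lambda_0)>1/m$ by hypothesis. The remaining possibility, $R\leq C$ almost surely, is treated directly: then $g$ decreases to $g(\infty)=\PP(R=C)$, and one uses the pointwise bound together with the standing assumption that $\{R\geq C\}$ has non-trivial probability. Assembling the cases yields the equivalence asserted in \eqref{critaug}. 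The main obstacle is precisely this last paragraph, the interchange of the supremum over $\lambda$ with a strict inequality; once it is settled via convexity, what is left is only the independence factorisation of $\E[e^{\lambda Z}]$ and the monotonicity of $\log$.
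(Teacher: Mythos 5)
Your reduction to Proposition~\ref{charsurvival} via the factorization $\E[e^{\lambda Z}]=\E[e^{\lambda R}]\,\E[e^{-\lambda C}]$ is exactly the derivation the paper intends (the corollary is stated without proof as an immediate consequence of the proposition), and you are right that the only delicate point is passing from the pointwise bound $g(\lambda)>1/m$ to $\inf_{\lambda\geq 0}g(\lambda)>1/m$, which is what $I(0)<\log m$ actually requires. Your first two cases are sound: if $\E[R-C]\geq 0$ the convex function $\log g$ is non-decreasing, so the infimum is $g(0)=1$; and if $\PP(R>C)>0$ with negative drift, then $g(\lambda)\to\infty$, so convexity and lower semicontinuity force the infimum to be attained at a finite $\lambda_0$, where the pointwise hypothesis gives the strict bound.

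The third case, $R\leq C$ almost surely, is a genuine gap, and it cannot be closed as you suggest: there $g$ decreases to $\PP(R=C)$, the infimum is not attained (when $\PP(R<C)>0$), and the pointwise condition only yields $\PP(R=C)\geq 1/m$, while Proposition~\ref{charsurvival} demands the strict inequality $I(0)<\log m$, i.e.\ $\PP(R=C)>1/m$; the standing assumption that $\{R\geq C\}$ has non-trivial probability does not bridge this. Concretely, take $m=2$, $C\equiv 1$ and $R\in\{0,1\}$ with $r_1=\PP(R=1)=\tfrac12$ (Example~2.2 at the critical point): then $\E[e^{\lambda R}]\E[e^{-\lambda C}]=r_0e^{-\lambda}+\tfrac12>\tfrac12$ for every finite $\lambda\geq 0$, so \eqref{critaug} holds as literally written, yet augmented routing is critical site percolation on the binary tree and dies almost surely. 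The corollary is therefore only correct if \eqref{critaug} is read as $\inf_{\lambda\geq 0}\E[e^{\lambda R}]\E[e^{-\lambda C}]>1/m$ --- which is in fact how the paper uses it in Example~2.2, where the stated criterion is $r_1>1/m$ rather than $r_1\geq 1/m$. You should either prove the corollary in the infimum formulation (your first two cases then show that the infimum and pointwise versions agree except when $R\leq C$ a.s.), or record the additional requirement $\PP(R=C)>1/m$ in that degenerate case.
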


The proposition morally follows from Theorem \ref{speedBRW} below, which goes back to J.\ D.\ Biggins, J.\ M.\ Hammersley, J.\ F.\ C.\ Kingman, see \cite{biggins,hammersley,kingman}. For a proof, we also refer to \cite[Theorem 2.1]{Zhan}. However, we will not need Theorem \ref{speedBRW}, but will give a direct proof of Proposition \ref{charsurvival} that we will then apply also for complete routing and boundary routing.

\begin{theorem}[Biggins, Hammersley, Kingman] \label{speedBRW}
For a branching random walk $\{V_x\}$, we have that
\begin{equation}\label{speed}
\lim\limits_{n \to \infty}\frac{1}{n}\max\limits_{x \in \mathcal{V}, |x|=n} V_x = s^*\quad \PP- \rm{a.s.}
\end{equation}
\end{theorem}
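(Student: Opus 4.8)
The plan is to prove the two inequalities $\limsup_n \frac1n M_n \le s^*$ and $\liminf_n \frac1n M_n \ge s^*$ separately, where $M_n = \max_{|x|=n} V_x$. The starting point is the convex-duality reformulation $s^* = \inf_{\lambda>0}\frac{\log m + \Lambda(\lambda)}{\lambda}$ with $\Lambda(\lambda) = \log \E[e^{\lambda Z}]$: indeed $I(s)\le\log m$ is equivalent to $\lambda s - \Lambda(\lambda) \le \log m$ for every $\lambda\ge 0$, i.e.\ to $s \le (\log m + \Lambda(\lambda))/\lambda$ for every $\lambda>0$. A structural remark that makes the large-deviation machinery applicable is that, although the displacements attached to the children of a single vertex are correlated through the shared $R_x$, the displacements along any fixed root-to-$x$ ray are i.i.d.\ copies of $Z$; hence $V_x$ has the law of $S_{|x|}=Z_1+\cdots+Z_{|x|}$ for each fixed $x$, and Cram\'er's theorem \eqref{ld} applies along rays.

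For the upper bound, fix $s>s^*$, so $I(s)>\log m$. A first-moment bound gives $\PP(M_n\ge sn)\le \E\,\#\{x:|x|=n,\ V_x\ge sn\}=m^n\,\PP(S_n\ge sn)$, and the Chernoff bound $\PP(S_n\ge sn)\le e^{-nI(s)}$ yields $\PP(M_n\ge sn)\le e^{n(\log m-I(s))}$. Since $\log m-I(s)<0$, this is summable, so Borel--Cantelli gives $M_n<sn$ for all large $n$ almost surely, whence $\limsup_n\frac1n M_n\le s$; letting $s\downarrow s^*$ closes this half. Note that linearity of expectation makes this computation insensitive to the sibling correlations, so no independence beyond the single-ray structure is used here.

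For the lower bound, fix $s<s^*$, so $I(s)<\log m$, and the count $W_n=\#\{x:|x|=n,\ V_x\ge sn\}$ satisfies $\E[W_n]=m^n\,\PP(S_n\ge sn)=e^{n(\log m-I(s))+o(n)}\to\infty$ by the lower Cram\'er bound. The aim is to promote this to the almost-sure existence of a ray along which $V_{\xi_n}/n$ stays above $s$. I would do this by an embedded-branching argument: over blocks of $k$ generations a particle produces, with probability bounded below uniformly in its position, at least two descendants whose relative displacement exceeds $sk$, a claim I would verify by a truncated second-moment (Paley--Zygmund) estimate on the block count; choosing $k$ large makes the resulting Galton--Watson process of ``good'' particles supercritical, so it survives with some probability $q>0$ and supplies a ray $\xi$ with $\liminf_n V_{\xi_n}/n\ge s$. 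To upgrade ``probability $q$'' to ``probability one'', I would use that the subtrees rooted at level $\ell$ are i.i.d.\ copies of the whole process and that the $m$-ary tree never dies: the probability that none of the $m^\ell$ level-$\ell$ subtrees contains such a ray is at most $(1-q)^{m^\ell}\to 0$, and since a good ray in any subtree extends to a good ray from the root, the event that some ray has $\liminf_n V_{\xi_n}/n\ge s$ has probability one. Then $\liminf_n\frac1n M_n\ge\liminf_n V_{\xi_n}/n\ge s$ a.s., and letting $s\uparrow s^*$ finishes; the case $s^*=\infty$ is covered by running this for every finite $s$.

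I expect the lower bound to be the main obstacle. The divergence of $\E[W_n]$ does not by itself force $W_n\ge 1$, and the second-moment method requires controlling $\E[W_n^2]=\sum_{x,y}\PP(V_x\ge sn,\ V_y\ge sn)$, whose dominant contribution comes from pairs whose paths split early; without an exponential-moment assumption this variance can be too large, so a truncation of the displacements (replacing $Z$ by $Z\wedge t$ and sending $t\to\infty$ at the end) is the technical heart of the argument. A secondary point to verify is that the sibling correlation at the splitting generation perturbs the pair probabilities only by a bounded, subexponential factor and therefore does not affect the exponential rate, so the standard branching-random-walk second-moment estimate carries over to our slightly non-i.i.d.\ setting.
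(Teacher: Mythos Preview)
Your upper bound is exactly the paper's argument: union bound over the $m^n$ vertices at level $n$, Cram\'er/Chernoff, then Borel--Cantelli.

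For the lower bound the paper takes a shorter route that sidesteps both obstacles you flag. Rather than counting descendants whose \emph{endpoint} displacement exceeds $sk$ and invoking Paley--Zygmund, the paper defines the embedded Galton--Watson process via a \emph{path} constraint: $y$ at level $ik$ is declared a child of $x$ at level $(i-1)k$ if the walk stays above the linear barrier $\ell s$ at every intermediate level $\ell$. The key input is the Mogulskii-type trajectory large deviation
\[
\lim_{n\to\infty}\frac1n\log\PP\Bigl(\tfrac{Z_1+\cdots+Z_j}{j}\ge s,\ j=1,\ldots,n\Bigr)=-I(s),
\]
which gives the constrained probability the same exponential rate as the endpoint probability. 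The expected offspring is then at least $m^k e^{-k(I(s)+\delta)}>1$ for $k$ large, so supercriticality follows from the \emph{first} moment alone---no second-moment estimate, no truncation, and no sibling-correlation worry, since the path constraint involves only increments along a single ray, which are i.i.d. The upgrade from positive probability to probability one is done via a $0$--$1$ law, of which your subtree-independence argument is a legitimate variant.

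Your endpoint-only formulation also leaves a genuine gap. If the embedded process controls only the block endpoints, you obtain $V_{\xi_{jk}}\ge jsk$ for all $j$, but your stated conclusion $\liminf_n V_{\xi_n}/n\ge s$ does not follow: for $jk<n<(j+1)k$ the value $V_{\xi_n}$ can dip arbitrarily far below the barrier when $Z$ is unbounded below, and since the ray $\xi$ is selected \emph{after} the randomness you cannot appeal to a Borel--Cantelli argument along a pre-fixed ray to control these dips. The paper's path constraint is precisely what closes this hole. To repair your version you would need either to truncate $Z$ from below (not the truncation from above you propose for the variance) or to impose the full path constraint; once you do the latter, Mogulskii hands you the first-moment bound directly and the whole Paley--Zygmund machinery becomes unnecessary.
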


\noindent \emph{Proof of Proposition \ref{charsurvival}.} The proof is based on two standard arguments, which we recall since we will use them later. We also refer to \cite{climb}. We first show that the survival probability is 0 if $s^*<0$ by showing that
\begin{equation}\label{UBspeed}
\limsup_{k}\frac{1}{k}\max_{x:|x|=k}V_x\leq  s^*\quad \PP- \rm{a.s.}
\end{equation}
Indeed, (\ref{UBspeed}) implies that, if the branching random walk is killed at the ``linear barrier'' $sk$ with $s > s^*$ (i.e.\ all vertices $x_k$ at distance $k$ from the root with $V_{x_k} < s k$ are removed along with all their descendants), then it will die out almost surely. Our process is killed at $s=0$ and hence $\alpha =0$ if $s^* < 0$.

To establish (\ref{UBspeed}), we will consider the probabilities that there is a vertex $x$ at distance $k$ from the root with $V_x\geq s^* + \delta$, and use a union bound. There are $m^k$ such vertices, and if $\PP(V_x\geq s^* + \delta)$ decays fast enough, our probabilities will be summable. Assume that $s^* < \infty$ and fix $\delta > 0$. Then there is $\varepsilon > 0$ such that $I(s^*+ \delta) - \varepsilon > \log m$. Take $k$ large enough such that
$$
\PP\left[\frac{Z_1 + \cdots + Z_k}{k} \geq s^* + \delta \right] \leq \exp(-k (I(s^* + \delta )- \varepsilon))\, .
$$
Now, by a union bound,
$$
\PP\left[\frac{1}{k}\max_{x:|x|=k}V_x \geq s^* + \delta \right] \leq m^k P\left[\frac{Z_1 + \cdots + Z_k}{k} \geq s^* + \delta \right]
$$
$$
\leq m^k \exp(-k (I(s^* + \delta )- \varepsilon))
$$
and we conclude, using the Borel-Cantelli lemma, that
$$
\limsup_{k}\frac{1}{k}\max_{x:|x|=k}V_x\leq  s^* + \delta \quad \PP- \rm{a.s.}
$$
Since $\delta > 0$ was arbitrary, \eqref{UBspeed} follows from this.

To show that the survival probability is strictly positive if $s^* > 0$, we will construct a supercritical Galton-Watson process embedded in our tree. To this end, first note that
$$
\lim\limits_{n \to \infty}\frac{1}{n}\log \PP\left[\frac{Z_1 + \cdots + Z_j}{j} \geq s, j = 0,1, \ldots ,n \right] = -I(s),
$$
see \cite{mogulskii} or \cite[Theorem 5.1.2]{dembozeitouni}. Fix $s<s^*$. Since $I$ is a convex function which is strictly convex on $\{x: I(x) \in (0, \infty)\}$, we can pick $\delta > 0$ such that $I(s) < \log m -\delta$. Consider an embedded Galton-Watson process consisting of all vertices at distances $k, 2k, 3k, \ldots$ from the root such that the path of the branching random walk between the vertex (at distance $ik$ from the root, say) and its predecessor (at distance $(i-1)k$ from the root) stays strictly above $\ell s$ at distance $\ell= (i-1)k +j$ $(j=0,1, \ldots, k)$ from the root. Take $k$ large enough such that
$$
\PP\left[\frac{Z_1 + \cdots + Z_j}{j} \geq s, j =0,1, \ldots ,k \right]\geq \exp(-k(I(s) + \delta)).
$$
Then the embedded Galton-Watson process has expected offspring at least $\exp(-k(I(s)+ \delta))m^k > 1$, and therefore it has a strictly positive survival probability. An infinite path $0=x_0,x_1,x_2\ldots$ from the root, where $x_i$ is a child of $x_{i-1}$, for all $i$, is called a ray. The above argument shows that for $s < s^*$, we have that
\begin{equation}\label{LBspeedinf}
\PP\left(\exists\mbox{ a ray }\{x_n\}\mbox{ with } V_{x_n}\geq ns\mbox{ for all }n\right)>0.
\end{equation}
In particular, if the branching random walk is killed at the ``linear barrier'' $s k$, with $s < s^*$, it survives with positive probability. Hence, $\alpha > 0$ if $s^* > 0$, since our process is killed at $s=0$.

Finally we consider the critical case $s^* =0$. This requires a refinement of the argument for the case when $s^*>0$: Assume that $s^* =0$, so that $I(0) = \log m$. Then, by the Bahadur-Rao Theorem (see \cite[Theorem 3.7.4]{dembozeitouni}), there is a constant $c< 0$ such that, for all $k$,
$$
\PP\left[\frac{Z_1 + \cdots + Z_k}{k} \geq 0 \right] \leq \frac{c}{\sqrt{k}} \exp(-k (I(0))\, .
$$
Now consider the probability that there is a vertex $x$ at distance $k$ from the root with $V_x\geq 0$. There are $m^k$ such vertices and, using a union bound, we get that
$$
\PP\left[\frac{1}{k}\max_{x:|x|=k}V_x \geq 0\right] \leq m^k P\left[\frac{Z_1 + \cdots + Z_k}{k} \geq 0 \right]\leq \frac{c}{\sqrt{k}}.
$$
We conclude, using the Borel-Cantelli lemma along the subsequence $k^4$ $(k=1, 2, \ldots )$, that
$$
\frac{1}{k^4}\max_{x:|x|=k^4 }V_x \geq  0  \mbox{ only for finitely many $k$}, \quad \PP- \rm{a.s.}
$$
This implies that $\alpha =0$. In fact, much more is known:  A ``nearly optimal'' ray consists of vertices $x_k$ with $V_{x_k} \geq (s^* -\varepsilon)k$ for all $k$ and, in \cite[Theorem 1.2]{nyzsurvival}, it is shown that the probability that a nearly optimal ray exists goes to $0$ as $\varepsilon \to 0$.\hfill$\Box$

\begin{remark} Let $\partial \mathcal{T}$ denote the boundary of the tree which is defined as the set of all rays in the tree. One can use a $0-1$ law as in \cite[Proposition 3.2]{climb} to conclude from \eqref{LBspeedinf} that for $s < s^*$, we have
\begin{equation}
\PP(\sup\limits_{\xi \in \partial \mathcal{T}}\liminf_{x_k \in \xi, |x_k| = k}\frac{1}{k}V_{x_k}\geq  s) =1,
\end{equation}
which implies that
\begin{equation}\label{LBspeedsure}
\PP(\sup\limits_{\xi \in \partial \mathcal{T}}\liminf_{x_k \in \xi, |x_k| = k}\frac{1}{k}V_{x_k}\geq  s^*) =1\, .
\end{equation}
Now, Theorem \ref{speedBRW} follows, in our setup, from \eqref{UBspeed} and \eqref{LBspeedsure}.
\end{remark}

\textbf{Example 2.1.} Let $R$ and $C$ be Poisson distributed with mean $\mu_R$ and $\mu_C$, respectively. Then
$$
\log \E[\exp{\lambda(R-C)}]= (e^\lambda -1)\mu_R + (e^{ - \lambda} -1)\mu_C
$$
and \eqref{critaug} yields, after an easy calculation that infinite transmission is possible if and only if $\sqrt{\mu_C} - \sqrt{\mu_R} < \sqrt{\log m}$.\hfill$\Box$

\textbf{Example 2.2.} Let $C\equiv 1$ and assume that a transceiver is either functioning with range 1 (probability $r_1\neq 1$) or non-functioning with range 0 (probability $r_0$). Then
$$
\E[e^{\lambda R}]\E[e^{-\lambda C}]=r_0e^{-\lambda}+r_1
$$
and we see that \eqref{critaug} is satisfied if and only if $r_1>1/m$. Next assume that a functioning transceiver has range 2 (probability $r_2=1-r_0$). We apply Proposition \ref{charsurvival}, calculating $I(0)=0$ if $r_2 \geq 1/2$ and $I(0)= -\log(2 \sqrt{r_2(1- r_2)})$ otherwise, and get that either $r_2 \geq 1/2$ or $r_2(1- r_2) > (4m^2) ^{-1}$. Hence infinite transmission is possible if and only if
$$
r_2 > \frac{1}{2}\left(1 - \sqrt{1-1/m^2}\right)\, .
$$
\hfill$\Box$

\textbf{Example 2.3.} Consider the case with $R\equiv 1$ and $C\in\{0,2\}$, with $\PP(C=0)=p_0$ and $\PP(C=2)=p_2$. This is equivalent to the previous example with $r_2=p_0$ and $r_0=p_2$ in the sense that the effect of passing a transceiver and a consecutive edge is that either the signal strength is increased by 1 (probability $p_0$) or decreased by 1 (probability $p_2$). It follows that infinite transmission is possible if and only if
$$
p_0 > \frac{1}{2}\left(1 - \sqrt{1-1/m^2}\right)\, .
$$
\hfill$\Box$

\textbf{Example 2.4.} Set $C\equiv 1$ and let $R$ be Poisson distributed with mean $\mu$. Then (\ref{critaug}) is equivalent to
\begin{equation}\label{eq:1po_aug}
(e^\lambda-1)\mu-\lambda+\log m > 0\quad \mbox{for all }\lambda\geq 0.
\end{equation}
The minimal value is attained for $\lambda=-\log \mu$, and hence (\ref{eq:1po_aug}) is true if $\mu > 1$ or $1-\mu+\log(m\mu) > 0$. For $m=2$, we obtain numerically that infinite transmission is possible if and only if $\mu > 0.23$.\hfill$\Box$

\section{Complete routing}

For complete routing, the transmission process can be described by a process $\{W_y\}$ that keeps track of the remaining range of a signal from the root when it reaches $y$ and is defined as follows: Set $W_0=0$ for the root and then, for a vertex $y$ that is a child of $x$ in the tree, let
$$
W_y = \left\{ \begin{array}{ll}
                      R_x-C_{(x,y)} & \mbox{if $R_x>W_x$};\\
                      W_x-C_{(x,y)} & \mbox{otherwise}.
                    \end{array}
            \right.
$$
Indeed, if $R_x>W_x$, then the range of the transceiver at $x$ is larger than the remaining range of the routed signal at $x$. Hence, by the definition of the scheme, the remaining range at a given child $y$ of $x$ is $R_x$ minus the cost $C_{(x,y)}$ of the edge $(x,y)$. If $R_x\leq W_x$ on the other hand, then the transceiver at $x$ does not increase the remaining range, and the remaining range at a given child $y$ is therefore $W_x$ minus the cost $C_{(x,y)}$ of the edge $(x,y)$. When $W_y$ takes on a negative value, the process dies at that location and all vertices in the subtree below $y$ are assigned the value $\Gamma$, where $\Gamma$ is a cemetery state.

If $m=1$, we have a Markov chain, killed when it takes a negative value. When $R$ has bounded support, say $R\leq b$ almost surely, then infinite transmission is not possible: Let $c>0$ and $r<c$ be any numbers such that $\PP(C\geq c)>0$ and $\PP(R\leq r) > 0$. Consider a sequence of length $\lceil b/(c-r)\rceil+1$ such that the strength of each transceiver is at most $r$ while the cost of the incoming link is at least $c$. Such a sequence occurs eventually with probability 1 and, since $W_y\leq b$, it is not hard to see that it kills the signal. Furthermore, one sees directly, or from (\ref{eq:hierarchy}), that infinite transmission is not possible if $\E[R]\leq \E[C] < \infty$. In the general case, we do not know if survival is possible.

Assume for the remainder of the section that $m \geq 2$. The process $\{W_y\}$ is not a branching random walk. It is also not a tree-valued Markov chain in the sense of \cite{benjaminiperes94}, since the values of the vertices of two children of $x$ are not chosen independently given $W_x$. In addition, the Markov process we are considering is not irreducible. Nevertheless, the arguments of the previous section apply and we can give conditions for a positive survival probability. To this end, let $W_0, W_1, W_2, \ldots $ be a Markov process with the same law as $W_0, W_{x_1}, W_{x_2}, \ldots$, where $x_i$ is a child of $x_{i-1}$. Hence, the transition mechanism is the following: Take two i.i.d.\ sequences $\{C_i\}$ and $\{R_i\}$ which are independent. Given $W_{i-1}$, we set $W_i=\Gamma $ if $W_{i-1} = \Gamma$, and if  $W_{i-1} \geq 0$, we set
\begin{eqnarray*}
W_i = \left\{ \begin{array}{ll}
                      R_i-C_i & \mbox{ if $R_i>W_{i-1}$ and $R_i - C_i \geq 0$};\\
                      W_{i-1}-C_i &  \mbox{ if $R_i\leq W_{i-1}$ and $W_{i-1}-C_i \geq 0$};\\
                      \Gamma & \mbox{ otherwise}.
                    \end{array}
            \right.
\end{eqnarray*}
Denote by $\PP_z$ the probability measure associated with the Markov process started from $z \in \RR$ (the transmission process is started from $W_0=0$ but in the proof of Theorem \ref{condcomplete} below we need to consider arbitrary starting points). This Markov process has $\Gamma$ as an absorbing state. Note that, due to subadditivity, the limit
$$
-\lim\limits_{n \to \infty}\frac{1}{n}\log \inf\limits_{z \in \RR^+} \PP_z(W_n \geq 0) =
-\lim\limits_{n \to \infty}\frac{1}{n}\log \inf\limits_{z \in \RR^+} \PP_z(W_n \neq \Gamma)
$$
exists. The state space of the Markov chain $\{W_i\}$ is (a subset of) $\{\Gamma\}\cup [0, \infty)$. We can think of the state space as an ordered set, with smallest element $\Gamma$, and we claim that
$$
\inf_{z\in\mathbb{R}^+}\PP_z(W_n\geq 0)=\PP_0(W_n\geq 0) \, .
$$
Indeed, using the natural coupling for two Markov chains distributed according to $\PP_z$ and  $\PP_y$, respectively, which is to take the same sequences $\{C_i\}$ and $\{R_i\}$ in the above construction, we see that for any $z$ and $y$ with $y< z$, the law of $W_1$ under $\PP_y$ is dominated by the law of $W_1$ under  $\PP_z$, and by induction, the law of $W_n$ under $\PP_0$ is dominated by the law of $W_1$ under  $\PP_z$, for any $z > 0$ and any $n$. We conclude that
\begin{equation}\label{beta_def}
\beta: = -\lim\limits_{n \to \infty}\frac{1}{n}\log \PP_0(W_n \geq 0)
\end{equation}
exists and that
\begin{equation}\label{bISinf}
\beta =  -\lim\limits_{n \to \infty}\frac{1}{n}\log \inf\limits_{z \in \RR^+} \PP_z(W_n \geq 0)\, .
\end{equation}

The following theorem asserts that complete routing can transmit indefinitely if $\beta<\log m$ but not if $\beta>\log m$.

\begin{theorem}\label{condcomplete}
Assume that $m \geq 2$ and let $\beta$ be defined as in \eqref{beta_def}.
\begin{itemize}
 \item[\rm{(i)}]If, for some subsequence $n_k$ of the integers with $n_k \to \infty$ as $k \to \infty$,
\begin{equation}\label{conddie}
\sum\limits_{k=1} ^\infty m^{n_k} \PP_0(W_{n_k} \geq 0) < \infty
\end{equation}
then $\PP\left(|O_{\sss{\rm comp}}|=\infty\right) = 0$. In particular, \eqref{conddie} is satisfied if $\beta  >  \log m$.\\
\item[\rm{(ii)}] If $\beta < \log m$, then $\PP\left(|O_{\sss{\rm comp}}|=\infty\right) > 0$.\\
\end{itemize}
\end{theorem}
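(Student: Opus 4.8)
\emph{Proof idea.} The plan is to adapt the two arguments from the proof of Proposition~\ref{charsurvival}. The structural fact used throughout is that, by the definition of complete routing, whether a vertex $y$ at level $n$ is reached depends only on the configuration on the path $\Gamma_{0,y}$, along which $\{W_x\}$ evolves as the Markov chain $\{W_i\}$ introduced above started from $W_0=0$; in particular $\PP(y\in O_{\sss{\rm comp}})=\PP_0(W_n\geq 0)$.

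For part~(i), I would estimate the probability that some vertex at level $n_k$ is reached by a union bound over the $m^{n_k}$ vertices at that level, obtaining $m^{n_k}\PP_0(W_{n_k}\geq 0)$. Under \eqref{conddie} this is summable in $k$, so Borel--Cantelli gives that, almost surely, no vertex at level $n_k$ is reached once $k$ is large. Since $O_{\sss{\rm comp}}$ is a connected, locally finite subtree of $\mathcal{T}$ containing the root, the event $|O_{\sss{\rm comp}}|=\infty$ would force, via K\"onig's lemma, a ray in $O_{\sss{\rm comp}}$ and hence a reached vertex at every level, in particular at infinitely many $n_k$ --- a contradiction, so $\PP(|O_{\sss{\rm comp}}|=\infty)=0$. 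The last assertion is immediate: if $\beta>\log m$ then, picking $\varepsilon\in(0,\beta-\log m)$, we get $m^n\PP_0(W_n\geq 0)\leq\bigl(m\,e^{-(\beta-\varepsilon)}\bigr)^n$ for all large $n$, which is summable with $n_k=k$.

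For part~(ii), I would build a supercritical Galton--Watson process embedded along the block boundaries $0,k,2k,\dots$; since $-\frac1n\log\PP_0(W_n\geq 0)\to\beta<\log m$, we may fix $k$ with $m^k\PP_0(W_k\geq 0)>1$. The main obstacle, compared with Section~2, is that $\{W_x\}$ is neither a branching random walk nor an independent tree-indexed Markov chain: two children of $x$ share the term $R_x$, and two descendants of $x$ at level $+k$ share the common prefix of their paths from $x$, so a naive embedded process does not have i.i.d.\ offspring. I would resolve this by combining a monotonicity observation with a worst-case reset. First, the transition map sending $w$ to $r-c$ when $r>w$ and to $w-c$ otherwise is non-decreasing in $w$, so under the natural coupling a larger value of $W_x$ produces stochastically larger values throughout the subtree below $x$. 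Next, let $\widetilde W$ run the same dynamics except that its value is reset to $0$ at every vertex at a level $ik$ that it reaches (i.e.\ where it has not been absorbed in $\Gamma$); by the monotonicity just noted $\widetilde W_x\leq W_x$ for all $x$ (reading $\Gamma$ as $-\infty$), so it suffices to show $\widetilde W$ survives. For $\widetilde W$ the vertices reached at levels $0,k,2k,\dots$ do form a genuine Galton--Watson process: the number of level-$(i+1)k$ vertices reached below a reached level-$ik$ vertex $x$ is a function only of the weights $\{R_y,C_e\}$ lying strictly inside the height-$k$ subtree rooted at $x$, and these weight families are i.i.d.\ over the level-$ik$ vertices and independent of the history up to level $ik$. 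Its mean offspring is exactly $m^k\PP_0(W_k\geq 0)>1$, hence it is supercritical and survives with positive probability, and therefore so does complete routing. The verification of the reset coupling and the check that the embedded offspring family is genuinely i.i.d.\ are the points requiring the most care.
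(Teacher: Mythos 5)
Your proposal is correct and follows essentially the same route as the paper: part (i) is the union bound over the $m^{n_k}$ vertices at level $n_k$ plus Borel--Cantelli, and part (ii) is the embedded Galton--Watson process over blocks of length $k$ with offspring mean at least $m^k e^{-k(\beta+\delta)}>1$. Your reset-to-zero chain $\widetilde W$ is just a more explicit way of exploiting the same monotone coupling the paper uses to identify $\inf_{z\in\RR^+}\PP_z(W_k\geq 0)$ with $\PP_0(W_k\geq 0)$, and it cleanly settles the i.i.d.-offspring issue that the paper's argument passes over more quickly.
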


\begin{proof}
The proof of (i) is the same as the proof of \eqref{UBspeed}, and the proof of (ii) is the same as the proof of \eqref{LBspeedinf}. Indeed, using a union bound,
$$
\PP\left[\frac{1}{k}\max_{x:|x|=k}W_x \geq 0\right] \leq m^k \PP\left[W_k\geq 0\right]
$$
and hence it follows from the Borel-Cantelli lemma that, if \eqref{conddie} holds, then
$$
\limsup_{k}\frac{1}{k}\max_{x:|x|=k}V_x< 0 \quad \PP- \rm{a.s.}
$$
Part (i) follows from this by the same argument as in the proof of \eqref{UBspeed}.

To show (ii), we again construct an embedded Galton-Watson tree which survives with positive probability. Pick $\delta>0$ such that $\beta<\log m -\delta$ and choose $k$ large enough such that
$\inf\limits_{z \in \RR^+} \PP_z(W_k \neq \Gamma)\geq \exp(-k(\beta+\delta))$ (which is possible due to \eqref{bISinf}). Consider an embedded Galton-Watson process consisting of all vertices at distances $k, 2k, 3k, \ldots$ from the root such that the path of the branching random walk between the vertex (at distance $ik$ from the root, say) and its predecessor (at distance $(i-1)k$ from the root) does not hit $\Gamma$ (note that it suffices that $W$ takes non-negative values at the vertex and its predecessor). Then, the embedded Galton-Watson process has expected offspring at least $ \exp(-k(\beta+ \delta))m^k > 1$, and therefore it has a strictly positive survival probability.
\end{proof}

Obtaining explicit expressions for the probability $\PP(W_n\geq 0)$, and thereby for $\beta$, for some large class of distributions seems difficult. However, it is possible to deduce from Theorem \ref{condcomplete} that infinite transmission is always possible when $R$ is a power-law or when $\PP(C=0)>1/m$. The (simple) proofs of this are valid also for boundary routing, and therefore we give the proofs in the next section, see Corollary \ref{cor:pl} and \ref{cor:c_atom}. Here instead we analyze the condition in Theorem \ref{condcomplete} for some specific examples.

\textbf{Example 3.1} Let $C\equiv 1$ and $R\in\{0,1\}$ with $\PP(R=1)=r_1$. Then $W_n\geq 0$ if and only if no transceiver up to vertex $n$ is non-functioning with range 0. Hence $\PP(W_n\geq 0)=r_1^n$, so that $\beta=-\log r_1$, which is smaller than $\log m$ when $r_1>1/m$. This is the same condition as in Example 2.1, and indeed all schemes are equivalent in this case.\hfill$\Box$

\textbf{Example 3.2} Let $C\equiv 1$ and $R\in\{0,2\}$ with $\PP(R=0)=r_0$ and $\PP(R=2)=r_2=: r$.
In this case, $W_0, W_1, W_2, \ldots $ is a Markov chain with state space $\{\Gamma, 0, 1 \}$ and
with transition probabilities given by $p(\Gamma, \Gamma) =1$, $p(0, \Gamma) =r_0$, $p(0, 1) =r$, $p(1, \Gamma) =0$, $p(1, 0) = r_0$,  $p(1, 1) =r$. The transition matrix can be diagonalized, and has the eigenvalues $1, \frac{1}{2}(r + a)$ and $\frac{1}{2}(r - a)$, where $a = \sqrt{4r - 3 r ^2}$. We conclude that $\beta = -\log(\frac{1}{2}(r + a))$. Hence
$$
\PP\left(|O_{\sss{\rm comp}}|=\infty\right) > 0 \hbox{ if } r + \sqrt{4r - 3 r ^2} > \frac{2}{m}\, .
$$
and
$$
\PP\left(|O_{\sss{\rm comp}}|=\infty\right) = 0 \hbox{ if } r + \sqrt{4r - 3 r ^2} < \frac{2}{m}\, .
$$
This can be rewritten as
$$
\PP\left(|O_{\sss{\rm comp}}|=\infty\right) > 0 \hbox{ if } r  > \frac{1}{2}\left(1 + \frac{1}{m} - \sqrt{1+ \frac{2}{m} - \frac{3}{m^2}}\right)
$$
and
$$
\PP\left(|O_{\sss{\rm comp}}|=\infty\right) = 0 \hbox{ if } r  <  \frac{1}{2}\left(1 + \frac{1}{m} - \sqrt{1+ \frac{2}{m} - \frac{3}{m^2}}\right).
$$
In particular, recalling the condition for augmented routing from Example 2.3, we see that $r$ can be chosen such that infinite transmission is possible for augmented routing, but not for complete routing. For $m=2$ for instance, the critical value for $r$ is approximately $0.19$ with complete routing and approximately $0.067$ with augmented routing. We remark that, diagonalizing the transition matrix, one can compute that
$$
\PP(W_n \geq 0) = \frac{r + a}{2a}\left(\frac{r + a}{2}\right)^n + \frac{r - a}{2a}\left(\frac{r - a}{2}\right)^n ,
$$
but this does not help to settle the critical case $\beta = \log m$, since \eqref{conddie} is not satisfied. In general, we believe that, in the critical case, both scenarios are possible depending on the distributions.\hfill$\Box$

\textbf{Example 3.3} Next, we give another example where augmented routing is strictly more powerful than complete routing. To this end, recall Example 2.2, where is was shown that, when $R\equiv 1$ and $C\in\{0,2\}$, with $\PP(C=0)=p_0$, then infinite transmission is possible with augmented routing if and only if $p_0 > \frac{1}{2}(1 - \sqrt{1-1/m^2})$. For complete routing we note that $W_n\geq 0$ if and only if no edge between the root and vertex $n$ has weight 2. Thus $\beta=\log p_0$, implying that infinite transmission is possible if $p_0>1/m$, but not if $p_0<1/m$. For $m \geq 2$ and $p_0 = \frac{1}{2m}$, augmented routing can hence transmit indefinitely, but complete routing cannot.\hfill$\Box$

\section{Boundary routing}

First note that, when $\{R\geq C\}$ has a non-trivial probability, infinite transmission is never possible with boundary routing for $m=1$. Indeed, the tree $\mathcal{T}$ then reduces to a singly infinite path and the time until we encounter a transceiver at the boundary of the set of the informed vertices whose strength is strictly smaller than the cost of the edge to its un-informed neighbor is clearly almost surely finite. We hence restrict to $m\geq 2$.

We begin by giving an explicit condition for infinite transmission in the case when $C\equiv c$. By scaling we can take $c=1$ and it is then enough to consider integer-valued range variables $R$. Indeed, if $R$ is not integer-valued we instead work with $R'=\lfloor R\rfloor$ and note that this gives rise to the same transmission process.

\begin{prop} If $C\equiv 1$ and $R$ is integer-valued with $\PP(R=i)=r_i$ ($i= 0,1,2, \ldots$), then $\PP\left(|O_{\sss{\rm bond}}|=\infty\right)>0$ if and only if
\begin{equation}\label{eq:bond_cond_const}
\E\left[m^R\right]>1+r_0.
\end{equation}
\end{prop}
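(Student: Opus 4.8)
The plan is to reduce the boundary routing process with $C\equiv 1$ to a Galton--Watson process and then invoke the classical extinction criterion. (As noted above, it suffices to treat $C\equiv 1$ with $R$ integer-valued.) The key structural observation is that on a tree every non-root vertex is reached only through its parent, so at the instant a vertex $x$ first hears the signal its entire subtree is still uninformed. Since $C\equiv 1$, a path of length $\ell$ below $x$ costs $\ell$, so $\Lambda_x$ consists of all descendants of $x$ at distance at most $R_x$, and by the boundary routing rule (whose ``only through $\Omega^c$'' restriction is vacuous here, the subtree below $x$ being entirely uninformed) $x$ forwards to all of them. Thus, if $R_x=k\geq 1$, then $x$ informs exactly the complete $m$-ary block of depth $k$ below it, and the only vertices of this block with an uninformed child are the $m^k$ vertices at distance exactly $k$ from $x$; if $R_x=0$ then $x$ forwards nothing. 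Since the blocks attached to distinct frontier vertices are rooted at disjoint, fully uninformed subtrees, they evolve independently given their roots.

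Consequently, the set of vertices that ever forward the signal --- call them relays --- forms a Galton--Watson tree: the root is a relay, and a relay $x$ produces $m^{R_x}$ children-relays (the frontier of its block) if $R_x\geq 1$ and none if $R_x=0$. Hence the offspring variable $N$ satisfies $\PP(N=0)=r_0$ and $\PP(N=m^k)=r_k$ for $k\geq 1$, with mean
$$
\mu=\E[N]=\sum_{k\geq 1}m^k r_k=\E[m^R]-r_0\in[0,\infty].
$$
Moreover each relay informs only the finitely many further vertices at distances $1,\dots,R_x-1$ below it, so $O_{\sss{\rm bond}}$ is infinite if and only if the relay tree is infinite.

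It then remains to recall that a Galton--Watson tree is infinite with positive probability if and only if $\mu>1$ (with $\mu=\infty$ counted as supercritical). For the subcritical and critical directions one uses that the offspring law is not the point mass at $1$: when $r_0=0$ we have $N\geq m\geq 2$ and hence $\mu\geq m>1$, so these directions are vacuous; when $r_0>0$ we have $\PP(N=0)>0$, and a Galton--Watson process with $\mu\leq 1$ and $\PP(N=0)>0$ dies out almost surely. Translating the criterion $\mu>1$ through $\mu=\E[m^R]-r_0$ gives exactly $\E[m^R]>1+r_0$, which is \eqref{eq:bond_cond_const}.

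I expect the first paragraph to be where care is needed: one must verify cleanly the block/frontier picture --- in particular that a relay's block is always a \emph{complete} $m$-ary subtree, that its frontier has size exactly $m^{R_x}$, and that the sub-processes rooted at distinct frontier vertices are genuinely independent --- after which the branching-process input is entirely standard.
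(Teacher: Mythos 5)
Your proof is correct and follows essentially the same route as the paper: both identify the forwarding vertices with a Galton--Watson process whose offspring are the $m^{R_x}$ vertices at level $R_x$ below a relay $x$ (none if $R_x=0$), compute the mean $\E[m^R]-r_0$, and apply the extinction criterion. Your write-up is more detailed than the paper's (which states the reduction in two sentences), notably in verifying the block/frontier picture, the independence of the sub-processes, and the non-degeneracy needed for the critical case, but the underlying argument is identical.
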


\begin{proof}
The condition follows by relating the transmission process to a branching process: The ancestor of the process is the root 0, and the offspring of a vertex $x$ then is $\partial \Lambda_x$, that is, the vertices that are within the range of $x$, but that have at least one child that is not within the range of $x$. The possible offspring of $x$ are the vertices at level $R_x$ below $x$, and since there are $m^k$ vertices at level $k$ below $x$, the offspring mean is $\sum_{k=1}^\infty m^kr_k=\E[m^R]-r_0$.
\end{proof}

\textbf{Example 4.1.} Let $C\equiv 1$ and assume that a transceiver is either functioning with range $n$ (probability $r_n\neq 1$) or non-functioning with range 0 (probability $r_0=1-r_n$). The root can then transmit indefinitely if and only if $r_n>1/m^n$. For $n=2$, the condition becomes $r_2>1/m^2$, which is strictly stronger than the condition for complete routing derived in Example 3.2. The critical value for $r_2$ when $m=2$ for instance is 0.25 with boundary routing and approximately 0.19 with complete routing. If $r_i=L(i)a^{-i}$ for some slowly varying function $L(i)$ and $a<1$, then infinite transmission is possible for $a>1/m$, while for $a<1/m$ it depends on the precise form of the distribution.\hfill$\Box$

\textbf{Example 4.2.} Take $C\equiv 1$ and let $R$ be Poisson distributed with mean $\gamma$. Then (\ref{eq:bond_cond_const}) translates into
$$
e^{\gamma(m-1)}>1+e^{-\gamma},
$$
which holds for $\gamma$ large enough. For $m=2$, the threshold is $\gamma=\ln(1+\sqrt{2})=0.88$. This can be compared to the condition for augmented routing, which is $\gamma > 0.23$, see Example 2.4. For larger $m$, analytical expressions for the threshold are more involved, but numerical values are easily obtained.\hfill$\Box$

When the edge costs are random, a branching process approach does not work, since information on that the signal has reached a vertex $x$, but not a given child $y$, affects the distribution of $C_{(x,y)}$ in a way that is difficult to control. Also the number of un-informed children of $x$ carries information about $C_{(x,y)}$. However, the arguments from the previous section can be applied again to derive a general condition. To this end, we note that the transmission process can be described by a process $\{U_y\}$ that keeps track of the strength of a signal from the root when it reaches $y$ and is defined as follows: Set $U_0=0$ for the root and then, for a vertex $y$ that is a child of $x$ in the tree, let
$$
U_y = \left\{ \begin{array}{ll}
                       U_x-C_{(x,y)}& \mbox{if $U_x-C_{(x,y)}\geq 0$};\\
                       R_x-C_{(x,y)}& \mbox{otherwise}.
                    \end{array}
            \right.
$$
Indeed, when $U_x-C_{(x,y)}$ becomes strictly negative, we have passed a vertex that is on the boundary of the informed set. The transceiver at $x$ then forwards the signal and the new balance is $R_x-C_{(x,y)}$. When $U_y$ takes on a negative value, the process dies at that location and all vertices in the subtree below $y$ are assigned the value $\Gamma$, where $\Gamma$ is a cemetery state.

Let $U_0,U_1,\ldots$ be a Markov process distributed as the above process along a given ray in the tree, that is, $\Gamma$ is an absorbing state and, if $U_{i-1}\geq 0$, the transition mechanism is
$$
U_i = \left\{ \begin{array}{ll}
                       U_{i-1}-C_i & \mbox{if $U_{i-1}-C_i\geq 0$};\\
                       R_{i-1}-C_i & \mbox{if $U_{i-1}-C_i<0$ and $R_{i-1}-C_i\geq 0$};\\
                       \Gamma & \mbox{otherwise.}
                    \end{array}
            \right.
$$
Here $\{R_i\}$ and $\{C_i\}$ are i.i.d.\ sequences. Let $\PP_z$ denote the probability measure of the process $\{U_i\}$ started from $U_0=z$. In analogy with complete routing, the limit
\begin{equation}\label{eq:lim_inf_bound}
-\lim\limits_{n \to \infty}\frac{1}{n}\log \inf_{z\in\mathbb{R}^+}\PP_z(U_n \geq 0)
\end{equation}
exists due to subadditivity. Furthermore, we have also in this case that
\begin{equation}\label{eq:temp}
\lim\limits_{n \to \infty}\frac{1}{n}\log \inf_{z\in\mathbb{R}^+}\PP_z(U_n \geq 0)=\lim\limits_{n \to \infty}\frac{1}{n}\log \PP_0(U_n\geq 0).
\end{equation}
Indeed, if the chain is started from $U_0=z>0$, for sure it survives to the level $M_z=\max\{k:\sum_{i=1}^kC_i\leq z\}$, and from that point the mechanism is stochastically the same as for a process started from $U_0=0$. Hence,
\begin{equation}\label{gamma_def}
\gamma :=  -\lim\limits_{n \to \infty}\frac{1}{n}\log \inf\limits_{z \in \RR^+} \PP_z(U_n \geq 0)\,
\end{equation}
exists and coincides with the limit in \eqref{eq:lim_inf_bound}. This means that the proof of Theorem \ref{condcomplete} goes through verbatim and gives an analogous criteria for infinite transmission with boundary routing.

\begin{theorem} Assume that $m \geq 2$ and let $\gamma$ be defined as in \eqref{gamma_def}.
\begin{itemize}
\item[\rm{(i)}] If $\gamma  >  \log m$, then $\PP\left(|O_{\sss{\rm bond}}|=\infty\right) = 0$.
\item[\rm{(ii)}] If $\gamma < \log m$, then $\PP\left(|O_{\sss{\rm bond}}|=\infty\right) > 0$.
\end{itemize}
\end{theorem}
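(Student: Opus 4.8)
The plan is to run the proof of Theorem~\ref{condcomplete} essentially verbatim, with the tree-indexed process $\{U_y\}$ and the constant $\gamma$ of \eqref{gamma_def} taking the roles of $\{W_y\}$ and $\beta$. The one preliminary point to pin down is that $\{U_y\}$ faithfully encodes boundary routing along paths, i.e.\ that $\{y\in O_{\sss{\rm bond}}\}=\{U_y\neq\Gamma\}$ and that this event depends only on the weights on the path from $0$ to $y$. This holds because, on a fixed ray $0=x_0,x_1,\dots$, the value at $x_{i+1}$ is forced by $U_{x_i}$, $R_{x_i}$ and $C_{(x_i,x_{i+1})}$ alone: if $U_{x_i}\ge C_{(x_i,x_{i+1})}$ the incoming wave reaches $x_{i+1}$ with budget $U_{x_i}-C_{(x_i,x_{i+1})}$ regardless of whether $x_i$ happens to be a boundary vertex for other reasons, while if $U_{x_i}<C_{(x_i,x_{i+1})}$ then $x_i$ is necessarily a boundary vertex that re-amplifies, so $x_{i+1}$ is reached precisely when $R_{x_i}\ge C_{(x_i,x_{i+1})}$, then with budget $R_{x_i}-C_{(x_i,x_{i+1})}$. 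Hence $|O_{\sss{\rm bond}}|=\infty$ if and only if the killed chain $\{U_y\}$ admits an infinite surviving ray.

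For part~(i) I would reproduce the first-moment argument behind \eqref{UBspeed}. For $|x|=k$ one has $\PP(U_x\ge 0)=\PP_0(U_k\ge 0)$, so a union bound gives $\PP\big(\max_{|x|=k}U_x\ge 0\big)\le m^k\,\PP_0(U_k\ge 0)$. By \eqref{gamma_def} and \eqref{eq:temp}, $-\tfrac1k\log\PP_0(U_k\ge 0)\to\gamma$; if $\gamma>\log m$, fixing $\varepsilon\in(0,\gamma-\log m)$ bounds the right-hand side by $e^{k(\log m-\gamma+\varepsilon)}$ for all large $k$, which is summable. Borel--Cantelli then shows that, almost surely, only finitely many levels carry a vertex with $U\ge 0$. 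As $O_{\sss{\rm bond}}$ is a connected subtree of $\mathcal{T}$ containing the root, an infinite $O_{\sss{\rm bond}}$ would, by K\"onig's lemma, contain a vertex at every level, each with $U\neq\Gamma$ and hence $U\ge 0$ --- a contradiction. So $\PP(|O_{\sss{\rm bond}}|=\infty)=0$.

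For part~(ii) I would reproduce the embedded-Galton--Watson construction behind \eqref{LBspeedinf}. Choose $\delta>0$ with $\gamma+\delta<\log m$ and then, using \eqref{gamma_def}, a $k$ with $\inf_{z\in\RR^+}\PP_z(U_k\ge 0)\ge e^{-k(\gamma+\delta)}$. Declare a vertex at level $ik$ \emph{alive} when its level-$(i-1)k$ ancestor is alive and the $U$-path between the two never hits $\Gamma$; the alive vertices form an embedded branching process. Conditionally on the value $U_v=z$ at an alive $v$, the expected number of alive descendants of $v$ at the next embedded level is $m^k\,\PP_z(U_k\ge 0)\ge m^k e^{-k(\gamma+\delta)}>1$, a bound uniform over all states $z$ that can occur, and the subtrees below distinct level-$ik$ vertices carry independent fresh weights. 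A comparison from below with a supercritical Galton--Watson process then shows the embedded process survives with positive probability, and hence so does boundary routing.

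The step I expect to be the main obstacle is the only one where boundary routing genuinely departs from complete routing. For $\{W_n\}$, the coupling monotonicity in the starting value (underlying \eqref{bISinf}) makes $0$ the worst starting state and lets one literally restart the embedded process from $0$, obtaining a genuine Galton--Watson process. For $\{U_n\}$ this monotonicity fails --- a larger residual budget need not survive a forced re-amplification better --- so restarting from $0$ is not available. Its replacement is \eqref{eq:temp}, whose proof exploits the regeneration of $\{U_n\}$ at level $M_z$ to show that $\inf_{z\in\RR^+}\PP_z(U_n\ge 0)$ still decays at rate $\gamma$; this is exactly what lets $\PP_0$ give the correct rate in part~(i) and keeps the offspring mean uniformly above $1$ in part~(ii). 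Verifying that this uniform control, together with the bound $m^k$ on the offspring counts, suffices to carry through the Galton--Watson comparison is the one place where ``verbatim'' must actually be checked.
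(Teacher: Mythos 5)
Your proposal is correct and follows the paper's own route exactly: the paper establishes \eqref{eq:temp} via the regeneration at level $M_z$ precisely to replace the monotone coupling used for $\{W_n\}$, and then states that the proof of Theorem~\ref{condcomplete} goes through verbatim, which is what you carry out. Your explicit identification of the failure of monotonicity for $\{U_n\}$ and its repair by \eqref{eq:temp} as the only genuinely new point matches the paper's (more tersely stated) reasoning.
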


Just as for complete routing, it is typically difficult to find explicit expressions for $\gamma$. However, in some cases we can give sufficient conditions for $\gamma<\log m$, and hence for the possibility of infinite transmission. First recall that a tail distribution function $\bar F(x)=\PP(X>x)$ is said to be regularly varying with tail exponent $\tau-1$ if $\bar F(x)=x^{-(\tau-1)}L(x)$, where $x\mapsto L(x)$ is slowly varying at infinity (that is, $L(ax)/L(x)\to 1$ as $x\to \infty$ for any $a>0$). When this is the case, we say that the random variable $X$ has a power-law distribution.

\begin{corr}\label{cor:pl}
If $m \geq 2$ and $R$ has a power-law distribution, then $\PP\left(|O_{\sss{\rm bond}}|=\infty\right)>0$ regardless of the distribution of $C$.
\end{corr}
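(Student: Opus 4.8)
The plan is to show that $\gamma=0$. This suffices: since $m\geq 2$ we have $\log m>0=\gamma$, so part (ii) of the theorem above yields $\PP\left(|O_{\sss{\rm bond}}|=\infty\right)>0$. By \eqref{eq:temp} and \eqref{gamma_def} we have $\gamma=-\lim_{n}\frac1n\log\PP_0(U_n\geq 0)$, and $\PP_0(U_n\geq 0)\leq 1$ forces $\gamma\geq 0$; hence it is enough to exhibit a lower bound on $\PP_0(U_n\geq 0)$ that decays sub-exponentially in $n$.

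The mechanism is that a heavy-tailed range lets the signal coast through many levels on the strength of a single transceiver. Fix $A>0$ and put $q_A:=\PP(C\leq A)$. Excluding the trivial case $C\equiv 0$ (where $R\geq C$ a.s.\ and transmission is infinite with probability $1$), take $A$ large enough that $\PP(0<C\leq A)>0$. On the event $\{R_0\geq An\}\cap\{0<C_1\leq A\}\cap\bigcap_{i=2}^{n}\{C_i\leq A\}$ one checks by induction that the chain $\{U_i\}$ satisfies $U_i=R_0-\sum_{j=1}^{i}C_j\geq (n-i)A\geq 0$ for every $i\leq n$: at the first step $C_1>0$ forces $U_1=R_0-C_1$, and thereafter $U_{i-1}-C_i=R_0-\sum_{j\leq i}C_j\geq 0$ so that $U_i=U_{i-1}-C_i$. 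By independence of $\{R_i\}$ and $\{C_i\}$,
$$\PP_0(U_n\geq 0)\;\geq\;\PP(R\geq An)\cdot\PP(0<C\leq A)\cdot q_A^{\,n-1}.$$
Since $R$ is regularly varying, $\PP(R\geq An)=(An)^{-(\tau-1)}L(An)$ with $L$ slowly varying, whence $\frac1n\log\PP(R\geq An)\to 0$; consequently $\gamma\leq-\log q_A$ for every admissible $A$. Letting $A\to\infty$ gives $q_A\uparrow\PP(C<\infty)=1$, so $\gamma\leq 0$, and therefore $\gamma=0$, as required. Note that only $\frac1n\log\PP(R\geq An)\to 0$ is used, so any $R$ with no exponential moment would serve in place of a power law.

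The main (and essentially only) technical point is the possible atom of $C$ at $0$: in the recursion for $\{U_i\}$ the root's range $R_0$ is consulted only when $U_0-C_1<0$, so if $C_1=0$ then $R_0$ is discarded and one cannot simply condition on $\{R_0\geq An\}\cap\bigcap_{i\leq n}\{C_i\leq A\}$. Conditioning additionally on $\{C_1>0\}$, as above, repairs this at the cost of the prefactor $\PP(0<C\leq A)$, which does not affect the exponential rate; alternatively one may decompose over the index $p_1$ of the first edge of strictly positive cost and use that $R_{p_1-1}$, a fresh i.i.d.\ copy of $R$, is large. Apart from this bookkeeping, the argument is just the large-deviation lower bound already packaged into the boundary-routing theorem, so nothing further is needed.
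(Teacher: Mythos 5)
Your argument is correct and is essentially the paper's proof: both bound $\PP_0(U_n\geq 0)$ from below by the probability that the root's range covers the total cost of the first $n$ edges with each edge cost capped by a constant, and both use the power-law hypothesis only through $\tfrac1n\log\PP(R\geq An)\to 0$ (the paper stops at $\gamma<\log m$ by choosing the cap $c$ so that $\PP(C\leq c)>a/m$ with $a>1$, while you let the cap tend to infinity to get $\gamma=0$; either suffices). The only substantive difference is your extra intersection with $\{0<C_1\leq A\}$, and this is a worthwhile refinement: the paper's opening inequality $\PP(U_n\geq 0)\geq\PP(S_n\leq R)$ is justified by appeal to the transmission process itself but is not literally valid for the Markov chain $\{U_i\}$ as defined when $\PP(C=0)>0$ (if $C_1=0$ the chain sets $U_1=0$ and never consults $R_0$), and your conditioning repairs this at the cost of a constant prefactor that does not affect the exponential rate.
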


\begin{proof}
Let $S_n=\sum_{i=1}^nC_i$. Trivially $\PP(U_n\geq 0)\geq \PP(S_n\leq R)$, since the process is clearly alive at level $n$ if the total cost of a given path of length $n$ does not exceed the range of the root transceiver. For any $c>0$, we have that $\PP(S_n\leq R)\geq \PP(R\geq nc)\cdot\PP(S_n\leq nc)$ and trivially $\PP(S_n\leq nc)\geq \PP(C\leq c)^n$. Now take $c$ such that $\PP(C\leq c)\geq a/m$ for some $a\in(1,m)$. Then
$$
\PP(S_n\leq R)\geq \PP(R\geq nc)\cdot (a/m)^n
$$
and it follows that $\gamma<\log m$.
\end{proof}

The tail behavior of the cost variable $C$ does not have the same role in determining the possibility of infinite transmission. For instance, it is not the case that infinite transmission is necessarily impossible if $C$ has a power-law distribution while $R$ has a distribution with an exponentially decaying tail. Instead, a sufficiently large atom at 0 for $C$ guarantees that infinite transmission is possible, regardless of the tail behavior of the distributions.

\begin{corr} \label{cor:c_atom}
If $\PP(C=0)\geq 1/m$, then $\PP\left(|O_{\sss{\rm bond}}|=\infty\right)>0$.
\end{corr}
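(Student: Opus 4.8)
The plan is to deduce the statement from part~(ii) of the preceding theorem by checking that $\gamma<\log m$, and the whole argument rests on one elementary lower bound for the survival probability along a ray. Exactly as in the proof of Corollary~\ref{cor:pl}, forcing all edge weights on a fixed ray to vanish keeps the Markov chain $\{U_i\}$ (started from $U_0=0$) frozen at $0$, so it is certainly alive at level $n$; hence
$$
\PP_0(U_n\geq 0)\;\geq\;\PP(C=0)^n .
$$
Recalling from \eqref{gamma_def} and \eqref{eq:temp} that $\gamma=-\lim_n\tfrac1n\log\PP_0(U_n\geq 0)$, this gives $\gamma\leq-\log\PP(C=0)$. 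If $\PP(C=0)>1/m$ the right-hand side is already $<\log m$ and the proof is complete.

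For the borderline value $\PP(C=0)=1/m$ I would instead use that $a_n:=\PP_0(U_n\geq 0)$ is supermultiplicative. Indeed, by the Markov property and the domination $\inf_{z\geq 0}\PP_z(U_k\geq 0)=\PP_0(U_k\geq 0)$ (the same observation behind \eqref{eq:lim_inf_bound}--\eqref{eq:temp}), one gets $a_{n+k}\geq\PP_0(U_n\geq 0,\,U_{n+k}\geq 0)\geq a_n a_k$, so by Fekete's lemma $\gamma=-\sup_n\tfrac1n\log a_n$ and it is enough to produce a single $n$ with $a_n>(1/m)^n$. The obvious candidate is $n=1$: the transition rule gives $a_1=\PP_0(U_1\geq 0)=\PP(R_0-C_1\geq 0)=\PP(R\geq C)$, and writing $\PP(R\geq C)=\PP(C=0)+\PP(C>0,\,R\geq C)$ shows $a_1>\PP(C=0)=1/m$ provided recharging is possible, i.e.\ $\PP(C>0,\,R\geq C)>0$.

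The one place where genuine care is needed is thus the degenerate configuration in which $\PP(C=0)=1/m$ \emph{and} $R<C$ almost surely on $\{C>0\}$; then $\{U_i\}$ can never increase, $\PP_0(U_n\geq 0)=(1/m)^n$ exactly, $\gamma=\log m$, and the theorem says nothing --- consistently with the fact that one is then sitting precisely at the percolation threshold for the subgraph of zero-weight edges of the $m$-ary tree, where transmission dies out. I therefore expect the hypothesis to be meant as $\PP(C=0)>1/m$ (as it is stated at the end of Section~3), in which case the displayed one-line bound alone settles the corollary and there is no hard part at all.
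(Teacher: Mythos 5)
Your main bound is correct and is essentially the paper's own argument: the paper likewise lower-bounds $\PP_0(U_n\geq 0)$ by $\PP(S_n\leq R)\geq \PP(C=0)^n$ (forcing all $n$ edge costs to vanish) and concludes $\gamma\leq -\log \PP(C=0)$, so that for $\PP(C=0)>1/m$ one gets $\gamma<\log m$ and part (ii) of the preceding theorem applies.

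Your discussion of the borderline case $\PP(C=0)=1/m$ is a genuine and correct observation rather than a defect of your proof: the bound only yields $\gamma\leq \log m$ there, the theorem is silent when $\gamma=\log m$, and in the degenerate situation $R\equiv 0$, $C\in\{0,1\}$ with $\PP(C=0)=1/m$ the informed set is the cluster of the root in critical percolation on the $m$-ary tree, hence a.s.\ finite; so the statement with ``$\geq$'' cannot be literally correct, and the paper's own proof passes from $\PP(C=0)\geq 1/m$ to ``$\gamma<\log m$'' without addressing equality. However, your attempted rescue of the non-degenerate borderline case has a gap. The supermultiplicativity $a_{n+k}\geq a_na_k$ requires $\inf_{z\geq 0}\PP_z(U_k\geq 0)=\PP_0(U_k\geq 0)$, and the monotone coupling you invoke is the one the paper gives for the complete-routing chain $\{W_i\}$, not for $\{U_i\}$; for \eqref{eq:temp} the paper only matches the exponential rates via the $M_z$ argument. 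The chain $\{U_i\}$ is not obviously monotone in its starting point: from a small $z>0$ with $C_1\leq z$ it coasts down to $z-C_1$ without consulting $R_0$, whereas from $0$ it immediately recharges to $R_0-C_1$, which may be much larger, so a positive start can leave the chain strictly lower and the inequality $\PP_z(U_k\geq 0)\geq \PP_0(U_k\geq 0)$ would need a separate proof. The cleanest resolution is the one you suggest: state the hypothesis as $\PP(C=0)>1/m$, in which case your one-line bound settles the corollary and coincides with the paper's argument.
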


\begin{proof}
For any fixed $r\geq 0$, we have that
$$
\PP(S_n\leq r)\geq \PP(\cap_{i=1}^n\{C_i\leq r/n\})\geq \PP(C=0)^n.
$$
Since $\PP(U_n\geq 0)\geq \PP(S_n\leq R)$, this implies that $\gamma<\log m$.
\end{proof}

\section{Summary and conclusions}

We have derived sharp conditions for infinite transmission in all three schemes. For $m\geq 2$, the conditions are as follows:

\begin{itemize}
\item[$\bullet$] Augmented routing can transmit indefinitely if and only if $\E[{\rm{e}}^{\lambda R}]\E[{\rm{e}}^{-\lambda C}] > 1/m$ for all
$\lambda\geq 0$.
\item[$\bullet$] Let $\{W_i\}$ represent the transmission process with complete routing along a given ray (see Section 4 for a precise definition) and define $\beta=-\lim_{n \to \infty}\frac{1}{n}\log P(W_n \geq 0)$. Complete routing can then transmit indefinitely if $\beta<\log m$, but not if $\beta>\log m$. At the critical point $\beta=\log m$, we believe that both scenarios are possible.
\item[$\bullet$] For $m\geq 2$, boundary routing can transmit indefinitely if $\gamma <\log m$ but not if $\gamma>\log m$, where $\gamma=-\lim_{n \to \infty}\frac{1}{n}\log \PP(U_n \geq 0)$ and $\{U_i\}$ represents the transmission along a given ray. When $C\equiv 1$ and $R$ is integer-valued with $\PP(R=i)=r_i$, the condition becomes $\E[m^R]>1+r_0$.
\end{itemize}

For $m =1$, augmented routing can transmit indefinitely if $\E[R] > \E[C]$ but not if $\E[R] \leq \E[C]$ and both expectations are finite. If $R$ and $C$ both have infinite expectations, both scenarios can happen. Complete routing cannot transmit indefinitely when $R$ has bounded support, but the general case is open. Boundary routing cannot transmit indefinitely for any distribution.

When $R$ has a power-law distribution and $m\geq 2$, infinite transmission is always possible with all three schemes; see Corollary \ref{cor:pl}. The tail behavior of $C$ does not play the same role, since, according to Corollary \ref{cor:c_atom}, a large enough atom at 0 guarantees that infinite transmission is possible with boundary routing (and thereby also with the other schemes), regardless of the tail behaviors.

We have given several examples of distributions where the schemes are strictly different in the sense that there are regimes for the parameters of the distributions of $R$ and $C$ where one (or two) of the schemes can transmit indefinitely, but not the other two (one), see e.g.\ Example 3.2, 3.3, 4.1 and 4.2. Complete routing and boundary routing are trivially equivalent in some cases, e.g.\ when $R$ is constant (see also Example 3.1). An interesting question is if the three schemes are always strictly different when this is not the case and when $R$ does not have a power-law distribution, that is, is it then always strictly easier to transmit to infinity with complete routing than with boundary routing, and strictly easier with augmented routing than with complete routing? Or are there cases when the conditions coincide for (at least) two of the schemes? Answering this is complicated by the fact that the conditions for complete routing and boundary routing are somewhat difficult to analyze, since the probabilities $\PP(W_n\geq 0)$ and $\PP(U_n\geq 0)$ are typically not easy to calculate. 
However, when $C\equiv 1$ and $R$ is integer-valued, the conditions for boundary routing and for augmented routing are explicit and an easier question is if there are families of distributions of $R$ for which these conditions coincide. Below we show that the answer is no.

The condition \eqref{eq:bond_cond_const} for infinite transmission with boundary routing means that $r_0$ has to be sufficiently small. We now show that, for any distribution that satisfies \eqref{eq:bond_cond_const}, it is possible to strictly increase $r_0$ and still be able to transmit indefinitely with augmented routing. To this end, let the distribution of $R$ be described by $\{r_i\}_{i=0}^\infty$, let $k=\min\{i:r_i>0\}$ and take $\varepsilon\in(0,r_k)$. Then define $R_\varepsilon$ by shifting mass $\varepsilon$ from $k$ to 0, that is, $R_\varepsilon$ has distribution
$$
\PP(R_\varepsilon=i) = \left\{ \begin{array}{ll}
                       r_k-\varepsilon & \mbox{if $i=k$};\\
                       r_0+\varepsilon & \mbox{if $i=0$};\\
                       r_i & \mbox{otherwise.}
                    \end{array}
            \right.
$$

\begin{prop}
Let $m\geq 2$, take $C\equiv 1$ and let $R$ be integer-valued such that \eqref{eq:bond_cond_const} holds. If $\varepsilon$ is sufficiently small, then $R_\varepsilon$ satisfies \eqref{critaug}.
\end{prop}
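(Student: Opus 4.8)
\noindent The plan is to reduce \eqref{critaug} for $R_\varepsilon$ to a \emph{uniform} lower bound on a moment generating function that we already know holds for $R$, and then to observe that passing from $R$ to $R_\varepsilon$ changes that bound only by a factor close to $1$. Since $C\equiv 1$ we have $\E[e^{-\lambda C}]=e^{-\lambda}$, so for $R_\varepsilon$ condition \eqref{critaug} reads
\[
\E\bigl[e^{\lambda R_\varepsilon}\bigr]>e^{\lambda}/m\qquad\text{for all }\lambda\geq 0 ,
\]
and at any $\lambda$ with $\E[e^{\lambda R}]=\infty$ this is trivial (then $\E[e^{\lambda R_\varepsilon}]=\infty$ as well), so it suffices to treat $\lambda$ with $\E[e^{\lambda R}]<\infty$. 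From the definition of $R_\varepsilon$ one records the identity
\[
\E\bigl[e^{\lambda R_\varepsilon}\bigr]=\E\bigl[e^{\lambda R}\bigr]-\varepsilon\bigl(e^{k\lambda}-1\bigr).
\]

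\noindent Next I would transfer the hypothesis through the routing hierarchy. Since $R$ satisfies \eqref{eq:bond_cond_const}, boundary routing transmits indefinitely, hence by \eqref{eq:hierarchy} so does augmented routing, hence by the corollary to Proposition \ref{charsurvival} the variable $R$ itself satisfies \eqref{critaug}; combined with Proposition \ref{charsurvival} this is equivalent to $I(0)<\log m$, where $I$ is the rate function of $Z=R-1$ from \eqref{rsrate}. By the very definition of $I(0)$ as a supremum over $\lambda\geq 0$ we have $\log\E[e^{\lambda R}]-\lambda\geq -I(0)$ for all $\lambda\geq 0$, i.e.\ $\E[e^{\lambda R}]\,e^{-\lambda}\geq e^{-I(0)}$, and $e^{-I(0)}>1/m$ because $I(0)<\log m$. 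Writing $\mu_0:=\log m-I(0)>0$, this says $\E[e^{\lambda R}]\,e^{-\lambda}\geq e^{\mu_0}/m$ uniformly in $\lambda$.

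\noindent The final and decisive step is that the subtracted term in the identity above is uniformly dominated by $\E[e^{\lambda R}]$ itself: every atom of $R$ sits at an integer $\geq k$, so for $\lambda\geq 0$ we have $\E[e^{\lambda R}]=\sum_{i\geq k}r_i e^{i\lambda}\geq e^{k\lambda}\geq e^{k\lambda}-1$, and therefore
\[
\E\bigl[e^{\lambda R_\varepsilon}\bigr]=\E\bigl[e^{\lambda R}\bigr]-\varepsilon\bigl(e^{k\lambda}-1\bigr)\geq(1-\varepsilon)\,\E\bigl[e^{\lambda R}\bigr].
\]
Multiplying by $e^{-\lambda}$ and using the bound from the previous step yields $\E[e^{\lambda R_\varepsilon}]\,e^{-\lambda}\geq(1-\varepsilon)e^{\mu_0}/m$ for all $\lambda\geq 0$, which exceeds $1/m$ as soon as $(1-\varepsilon)e^{\mu_0}>1$, i.e.\ $\varepsilon<1-e^{-\mu_0}$. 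Hence $R_\varepsilon$ satisfies \eqref{critaug} whenever $\varepsilon<\min\{r_k,\,1-e^{-\mu_0}\}$, a positive quantity since $\mu_0>0$.

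\noindent I do not expect a genuine obstacle here; the only point that needs care is that $e^{k\lambda}-1$ is unbounded in $\lambda$, so a soft argument based merely on continuity of $\varepsilon\mapsto\E[e^{\lambda R_\varepsilon}]$ together with compactness would leave the large-$\lambda$ regime uncontrolled. The inequality $e^{k\lambda}-1\leq\E[e^{\lambda R}]$ is exactly what makes the relative size of the perturbation uniformly small, so that no case distinction between bounded and unbounded $R$ is needed. (In the degenerate situation $r_0>0$, where $k=0$ and $R_\varepsilon\equiv R$, the claim is immediate from the hierarchy, and the estimates above persist, the first one reading $\E[e^{\lambda R}]\geq 1$.)
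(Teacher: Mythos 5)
Your proof is correct, but it takes a genuinely different route from the paper's. The paper works directly from \eqref{eq:bond_cond_const}: it lower-bounds $m\E[e^{\lambda R_\varepsilon}]\E[e^{-\lambda C}]$ term by term and splits into the two regimes $e^{\lambda}\geq m$ and $e^{\lambda}<m$, using \eqref{eq:bond_cond_const} to extract an explicit surplus $mr_k c_\lambda^{k-1}$ that absorbs the perturbation for large $\lambda$, and boundedness of $e^{(k-1)\lambda}$ for small $\lambda$. You instead route the hypothesis through the survival hierarchy (\eqref{eq:bond_cond_const} $\Rightarrow$ boundary survives $\Rightarrow$ augmented survives by \eqref{eq:hierarchy} $\Rightarrow$ $I(0)<\log m$ by Proposition \ref{charsurvival}), which converts the pointwise condition \eqref{critaug} into the \emph{uniform} margin $\E[e^{\lambda R}]e^{-\lambda}\geq e^{-I(0)}=e^{\mu_0}/m$; you then observe that the perturbation is multiplicatively small, $\E[e^{\lambda R_\varepsilon}]\geq(1-\varepsilon)\E[e^{\lambda R}]$, because $e^{k\lambda}-1\leq e^{k\lambda}\leq\E[e^{\lambda R}]$ (all mass of $R$ sits at or above $k$). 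This avoids the paper's case split entirely, yields the explicit admissible range $\varepsilon<\min\{r_k,\,1-e^{I(0)-\log m}\}$, and in fact proves the more general statement that any $R$ with $I(0)<\log m$ tolerates a relative perturbation of its moment generating function by the factor $1-\varepsilon$. The price is that your argument leans on Proposition \ref{charsurvival} (whose proof of the critical case uses Bahadur--Rao) and on Proposition 4.1 plus the hierarchy, whereas the paper's computation is elementary and self-contained; a small stylistic simplification would be to deduce $I(0)<\log m$ directly from survival of augmented routing via Proposition \ref{charsurvival}, rather than passing through \eqref{critaug} as an intermediate step. Your handling of the edge cases ($\E[e^{\lambda R}]=\infty$, and $k=0$ where $R_\varepsilon\equiv R$) is fine, and the remark that a naive continuity-plus-compactness argument would fail for large $\lambda$ correctly identifies where the uniformity is needed.
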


\begin{proof}
We have that
\begin{equation}\label{eq:aug_mod}
m\E[e^{\lambda R_{\varepsilon}}]\E[e^{-\lambda C}] \geq r_0me^{-\lambda}-me^{(k-1)\lambda}\varepsilon+m\sum_{i=1}^\infty r_ie^{(i-1)\lambda}.
\end{equation}
First assume that $e^\lambda\geq m$, and write $e^\lambda=m+c_\lambda$, where $c_\lambda>0$ and $c_\lambda\sim e^\lambda$ as $\lambda\to\infty$. Then we obtain for the last term in \eqref{eq:aug_mod} that
$$
m\sum_{i=1}^\infty r_ie^{(i-1)\lambda}\geq \sum_{i=1}^\infty r_im^i+mr_kc_\lambda^{k-1}>1+mr_kc_\lambda^{k-1},
$$
where the last inequality follows from \eqref{eq:bond_cond_const}. It follows that, if $\varepsilon$ is sufficiently small, then $m\E[e^{\lambda R_{\varepsilon}}]\E[e^{-\lambda C}]>1$ for all $\lambda$ such that $e^\lambda\geq m$. Next assume that $e^\lambda<m$ so that $e^{-\lambda}>1/m$. Trivially
$$
m\sum_{i=1}^\infty r_ie^{(i-1)\lambda}\geq m(1-r_0)
$$
and hence the right hand side of \eqref{eq:aug_mod} is bounded from below by
$$
r_0-me^{(k-1)\lambda}\varepsilon+m(1-r_0),
$$
which is larger than 1 for all $\lambda$ in the specified range if $\varepsilon$ is sufficiently small.
\end{proof}

A possible continuation of the current work would be to investigate time dynamics of the transmission schemes under various rules for the transmission times. Conditionally on that the signal does not die, what is the asymptotic speed of the transmission? Are there setups when a scheme has a very small (large) probability of transmitting to infinity, but where the speed of transmission conditionally on survival is large (small)? Yet another question to investigate is when the root can hear a signal from infinitely far away. Do the conditions on $R$ and $C$ for this coincide with the conditions for infinite transmission? For all three routing schemes, the probability that the root can transmit to a given vertex $x$ at level $n$ is of course the same as the probability that $x$ can transmit to the root. However, the dependence structure for the events $\{\mbox{root can transmit to vertex $i$ at level $n$}\}_{i=1}^{m^n}$ and $\{\mbox{vertex $i$ at level $n$ can transmit to the root}\}_{i=1}^{m^n}$ is different, and hence the conditions could possibly be different. In \cite{dousse}, this issue is analyzed for a related problem in the context of a spatial Poisson process.

{\bf Acknowledgement: } We thank Silke Rolles and Anita Winter for inviting us to the ``Women in Probability'' workshop taking place in July 2010 at Technische Universit\"at M\"unchen, where this work was initiated.


\begin{thebibliography}{99}
\baselineskip=14pt

\bibitem{benjaminiperes94}
     Benjamini, I.\ and Peres, Y. (1994).
     Markov chains indexed by trees.
      {\it Ann.\ Probab.} \textbf{22}, 219--243.

\bibitem{benjaminiperes94perc}
     Benjamini, I.\ and Peres, Y. (1994).
     Tree indexed random walks on groups and first passage percolation.
      {\it Probab.\ Theory\ Rel.\ Fields} \textbf{98}, 91--112.

\bibitem{biggins}
    Biggins, J.~D.\ (1976).
    The first- and last-birth problems for a
    multitype age-dependent branching process.
    {\it Adv.\ Appl.\ Probab.} {\bf 8}, 446--459.

\bibitem{boll}
    Balister, P.\, Bollobas, B.\ and Walters, M.\ (2009).
    Random transceiver networks.
    {\it Adv.\ Appl.\ Probab.} {\bf 41}, 323--343.

\bibitem{dembozeitouni}
    Dembo, A. and Zeitouni, O. (1998).
    Large deviations techniques and applications.
    Second edition. Applications of Mathematics {\bf 38},
    Springer.

\bibitem{dousse}
    Dousse, O.\ (2012).
    Percolation in directed random geometric graphs.
    {\it IEEE Int. Symp. On Inf. Theory Proceedings}, 601-605.

\bibitem{nyzsurvival}
      Gantert, N.\ Hu, Y.\ and Shi, Z. (2011).
      Asymptotics for the survival probability in a killed branching random walk.
     {\it Ann. Inst. Henri Poincar\'e Probab. Stat.} {\bf 47},
     111--129.

\bibitem{hammersley}
    Hammersley, J.~M.\ (1974).
    Postulates for subadditive processes.
    {\it Ann.\ Probab.} {\bf 2}, 652--680.

\bibitem{kingman}
    Kingman, J.~F.~C.\ (1975).
    The first birth problem for an age-dependent
    branching process.
    {\it Ann.\ Probab.} {\bf 3}, 790--801.

\bibitem{lyonspem}
    Lyons, R.\ and Pemantle, R.\ (1992).
    Random walks in a random environment and first passage percolation on trees.
    {\it Ann.\ Probab.} {\bf 20}, 125--136.

\bibitem{LyonsPeresbook}
Lyons, R.\ and Peres, Y. \
Probability on trees and networks.
\texttt{http://mypage.iu.edu/\~{}rdlyons/prbtree/prbtree.html}


\bibitem{mogulskii}
    Mogulskii, A.~A.\ (1976).
    Large deviations for trajectories of multi-dimensional random walks.
    {\it Theory Probab.\ Appl.} {\bf 21},
    300--315.


\bibitem{climb}
     Peres, Y.\ (1999).
     Probability on trees: an introductory climb.
     {\it Lecture notes in mathematics}  {\bf 1717}
     Springer, 193--280

\bibitem{Zhan}
     Shi, Z.\ (2011).
     Random Walks and Trees.
     {\it ESAIM Proc.} {\bf 31},
     1--39.


\end{thebibliography}
\end{document}